\newtheorem{theorem}{Theorem}
\newtheorem{lemma}[theorem]{Lemma}
\newtheorem*{theorem*}{Theorem}
\theoremstyle{definition}
\newcommand{\Z}{\mathbb{Z}}
\newcommand{\F}{\mathbb{F}}
\newcommand{\Q}{\mathbb{Q}}
\newcommand{\Gal}{\textrm{Gal}}
\newcommand{\im}{\textrm{im}\thinspace}
\newcommand{\Aut}{\textrm{Aut}}
\newcommand{\oo}{\mathcal{O}}
\newcommand{\ssm}{\smallsetminus}
\newcommand{\End}{\textrm{End}}
\newcommand{\GL}{\textrm{GL}}
\newcommand{\CM}{\textrm{CM}}
\newcommand{\Stab}{\textrm{Stab}}
\newcommand{\FF}{\mathcal{F}}
\newcommand{\E}{\mathcal{E}}
\newcommand{\sss}{\textrm{ss}}
\newcommand{\Gss}{G^{\sss}}
\newcommand{\tors}{\textrm{tors}}
\date{}
\subjclass[2010]{11G05, 11G15}
\author{Tyler Genao}
\thanks{Email: \texttt{tylergenao@uga.edu}. 
This material is based upon work supported by the National Science Foundation Graduate Research Fellowship under Grant No. 1842396. Partial support was also provided by the Research and Training Group grant DMS-1344994 funded by the National Science Foundation.}
\title{Typically Bounding Torsion on Elliptic Curves With Rational $j$-invariant}
\begin{document}

\begin{abstract}
A family $\FF$ of elliptic curves defined over number fields is said to be \textit{typically bounded in torsion} if the torsion subgroups $E(F)[\tors]$ of those elliptic curves $E_{/F}\in \FF$ can be made uniformly bounded after removing from $\FF$ those whose number field degrees lie in a subset of $\Z^+$ with arbitrarily small upper density. For every number field $F$, we prove unconditionally that the family $\E_F$ of elliptic curves defined over number fields and with $F$-rational $j$-invariant is typically bounded in torsion. For any integer $d\in\Z^+$, we also strengthen a result on typically bounding torsion for the family $\E_d$ of elliptic curves defined over number fields and with degree $d$ $j$-invariant.
\end{abstract}
\maketitle
\tableofcontents
\section{Introduction}
\subsection{Bounds on torsion subgroups}
Let $E_{/F}$ denote an elliptic curve defined over a number field $F$. Then the Mordell--Weil theorem states that the abelian group $E(F)$ of $F$-rational points on $E$ is finitely generated. In particular, one has a decomposition
\[
E(F)\cong \Z^{r(E,F)}\oplus E(F)[\tors]
\]
where $E(F)[\tors]$ is its \textit{torsion subgroup over $F$.} In the present paper we prove results on asymptotic upper bounds for the sizes of these torsion subgroups.

In 1977, Mazur \cite{Maz77} gave a complete classification of torsion subgroups $E(\Q)[\tors]$ of all elliptic curves $E_{/\Q}$. Following this, a combination of work by Kamienny, Kenku and Momose \cite{KM88} \cite{Kam92a} \cite{Kam92b} lead to a complete classification of elliptic curve torsion subgroups $E(F)[\tors]$ over all quadratic number fields $F$. One consequence of their work is that there are finitely many possibilities for torsion subgroups over all quadratic number fields, suggesting a ``uniform bound" on torsion subgroups when the degree of the number field is fixed.

In 1996, Merel proved the ``strong uniform boundedness conjecture" suggested previously by work of Kamienny, Kenku and Momose.

\begin{theorem*}[\cite{Mer96}]
For each integer $d>0$ there exists a constant $B(d)\in\Z^+$ so that for all number fields $F/\Q$ of degree $d$ and all elliptic curves $E_{/F}$ one has
\[
\#E(F)[\emph{tors}]\leq B(d).
\]
\end{theorem*}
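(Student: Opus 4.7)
The plan is to translate the problem into a statement about rational points on modular curves, and then to attack those rational points using the formal immersion method pioneered by Mazur and Kamienny, pushed to its full strength by an Eisenstein-theoretic argument of Merel. First I would reduce to bounding the primes $p$ that can occur as orders of torsion points. If we can show that there is a bound $N(d)$ such that no elliptic curve over a degree $d$ number field has a point of prime order $p > N(d)$, then a separate argument (using the structure of $E[p^n]$ as a $\Gal(\bar{F}/F)$-module and Weil pairing arguments reducing to prime-power cases, plus the fact that the exponent controls the order for two-generated abelian groups) yields a bound on the full torsion subgroup. So the essential content is a bound on prime-order torsion.

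Next, a point $P \in E(F)$ of prime order $p$ with $[F:\Q]=d$ corresponds to a non-cuspidal closed point of the modular curve $Y_1(p)$ of residue degree at most $d$ over $\Q$. Equivalently, it gives a $\Q$-rational point of the $d$-th symmetric power $X_1(p)^{(d)}$ lying off the boundary. The strategy, following Kamienny, is to embed $X_1(p)^{(d)}$ (or more commonly $X_0(p)^{(d)}$, after replacing $F$ by a suitable extension and twisting) into a quotient of the Jacobian $J_1(p)$, typically via a $d$-fold Abel--Jacobi map based at a cusp, composed with projection to an Eisenstein-type quotient $\tilde J$. The reason one projects is that Mazur's theory tells us $\tilde J(\Q)$ is finite; so if the composed map has finite fibers on $\Q$-rational points, one obtains only finitely many closed degree $d$ points, all accounted for by cusps and CM.

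The heart of the argument, and the main obstacle, is to show that the map $X_1(p)^{(d)} \to \tilde J$ is a formal immersion at the cusp $\infty^{(d)}$ for all sufficiently large $p$ (depending only on $d$). By Kamienny's criterion, formal immersion at this cusp is equivalent to the linear independence modulo an Eisenstein prime $\ell$ of the images of certain Hecke operators $T_1,\dots,T_d$ acting on the cotangent space of $\tilde J$, or equivalently of the first $d$ coefficients of weight-two Eisenstein-like modular forms. For $d=1$ this is classical; Kamienny handled $d=2$; the novelty of Merel's proof is a combinatorial identity expressing Mazur's winding element in terms of the action of Hecke operators on modular symbols, which lets him verify the independence for arbitrary $d$ once $p$ exceeds an explicit polynomial bound in $d$. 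Concretely, I would follow Merel in bounding the relevant $d \times d$ matrix of Hecke eigenvalues away from singularity via estimates on sums of the form $\sum_{k=1}^{(p-1)/2} k^{2i-1} \pmod{p}$ and on the winding homomorphism. Carrying out this combinatorial/arithmetic core is the step where almost all of the difficulty lies; the modular-curve and Eisenstein-quotient scaffolding around it is by comparison routine, once one has internalized Mazur's framework.
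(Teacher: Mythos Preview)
The paper does not prove this statement at all: it is quoted as Merel's theorem \cite{Mer96} and used as background, with no argument supplied. So there is no ``paper's proof'' to compare your proposal against.

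That said, your sketch is a reasonable outline of Merel's actual argument. A couple of points where your description drifts from what Merel does: the relevant quotient of the Jacobian is the \emph{winding quotient} $J^e$ (the quotient on which the winding element has finite-index image), not the Eisenstein quotient; finiteness of $J^e(\Q)$ comes from Kolyvagin--Logach\"ev together with the nonvanishing results of Bump--Friedberg--Hoffstein/Murty--Murty, not from Mazur's Eisenstein ideal theory. Also, the linear independence criterion is for $T_1,\dots,T_d$ as elements of the Hecke algebra acting on the winding quotient (equivalently, as functionals on weight-two cusp forms), not as eigenvalues on Eisenstein forms. Your description of Merel's combinatorial core---expressing the winding element via Manin symbols and reducing the independence to an explicit arithmetic estimate---is accurate in spirit. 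Finally, the step ``a bound on prime torsion implies a bound on full torsion'' requires its own argument (Faltings--Frey, or the later work of Parent making the prime-power case effective); it is not quite as automatic as your first paragraph suggests.
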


In lieu of Merel's result, we can ask for the sharpest upper bound $B(d)$, i.e., the largest size of any elliptic curve torsion subgroup over all degree $d$ number fields. Let us define a function which records such values: for $d\in \Z^+$ we set
\[
T(d):=\max_{E_{/F}, [F:\Q]=d}\#E(F)[\tors].
\]
Only a few values of $T(d)$ are currently known. Via a complete classification of torsion subgroups over degree $d\leq 3$ number fields, one has $T(1)=16$ \cite{Maz77}, $T(2)=24$ \cite{KM88} \cite{Kam92a} \cite{Kam92b} and $T(3)=28$ \cite{DEvHMZB}. Lower bounds for $T(d)$ are known for $4\leq d\leq 6$ \cite{JKP06} \cite{DS17}. We also have explicit upper bounds on $T(d)$ which are, unfortunately, greater than exponential in $d$ \cite{Par99}. 

Let us also define the CM-analogue of $T(d)$,
\[
T_\CM(d):=\max_{\CM ~E_{/F}, [F:\Q]=d}\#E(F)[\tors].
\]
This arithmetic function records the largest size over all CM elliptic curve torsion subgroups over all degree $d$ number fields.
One clearly has $T_\CM(d)\leq T(d)<\infty$. 

Via a complete classification of CM torsion subgroups over number fields of a fixed degree, many values of $T_\CM(d)$ are known. For example, if $d\leq 13$ then $T_\CM(d)$ is recorded \cite{CCRS14}. For all primes $\ell>5$ one has $T_\CM(\ell)=6$ \cite{BCS17}. For each $n\in\Z^+$ one has for sufficiently large primes $\ell\gg_n 0$ that $T_\CM(\ell^n)=6$ \cite{BCP17}.  In fact, for any odd $d\in\Z^+$ $T_\CM(d)$ is also known \cite{BP17}.

Bourdon, Clark and Pollack \cite{BCP17} also treat $T_\CM(d)$ as an arithmetic function and study its behavior as $d\rightarrow \infty$. As noted above, $T_\CM(d)=6$ for infinitely many $d\in\Z^+$ \cite{BCS17}. In fact, they show that $T_\CM(d)\geq 6$ for all $d\in\Z^+$, whence the ``lower order" of $T_\CM(d)$ is 
\[
\liminf_{d\rightarrow \infty} T_\CM(d)=6.
\]
Similarly, Clark and Pollack have shown the ``upper order" result (Theorem 1.1 \cite{CP17})
\[
\limsup_{d\rightarrow\infty}\frac{T_\CM(d)}{d\log\log d}=\frac{e^\gamma\pi}{\sqrt{3}}.
\]

Bourdon, Clark and Pollack \cite{BCP17} also study the \textit{typical order} of $T_\CM(d)$, e.g., its statistical behavior away from certain sets of arbitrarily small upper density.
Recall that the \textit{upper (asymptotic) density} of a subset $S\subseteq \Z^+$ is
\[
\bar{\delta}(S):= \limsup_{x\rightarrow \infty} \frac{\#(S\cap[1,x])}{x}.
\]
They prove the following typical order result on $T_\CM(d)$.
\begin{theorem*}[Theorem 1.1.(i) \cite{BCP17}]
For all $\epsilon>0$ there exists $B_\epsilon\in\Z^+$ so that
\[
\bar{\delta}(\lbrace d\in\Z^+:T_\emph{CM}(d)\geq B_\epsilon\rbrace)\leq \epsilon.
\]
\end{theorem*}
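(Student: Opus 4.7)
My plan is to reduce the statement to an elementary density estimate, exploiting the rigid arithmetic structure of CM torsion. The principal structural input would be a result of the form to be established by Bourdon--Clark--Pollack: for each imaginary quadratic order $\oo$ and positive integer $N$, there is a well-defined minimum degree
\[
T(\oo, N) := \min\bigl\{[F:\Q] : \text{some CM } E_{/F}\text{ with } \End(E) = \oo \text{ has a rational point of order } N\bigr\},
\]
which satisfies $T(\oo, N) \gg \phi(N)/\bigl(w(\oo)\,h(\oo)\bigr)$. Because only the $13$ orders of class number one yield the smallest such degrees, the absolute minimum $T^*(N) := \min_\oo T(\oo, N)$ obeys $T^*(N) \gg \phi(N)$.

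With that input I would argue in three steps. First, reformulate: the statement amounts to showing $\bar\delta(S_B) \to 0$ as $B \to \infty$, where $S_B := \{d \in \Z^+ : T_{\CM}(d) \geq B\}$. Second, translate torsion size into divisibility: if $d \in S_B$, some CM elliptic curve $E_{/F}$ of degree $d$ satisfies $\#E(F)[\tors] \geq B$, and since such a torsion subgroup has rank at most $2$, it must contain an element of prime-power order $p^k \geq g(B)$ with $g(B) \to \infty$. This forces $T^*(p^k) \mid d$, and hence
\[
S_B \subseteq \bigcup_{p^k \geq g(B)} \bigl\{d \in \Z^+ : T^*(p^k) \mid d\bigr\}.
\]
Third, apply the elementary counting bound $\#\bigl(\bigcup_m \{d \leq x : m \mid d\}\bigr) \leq x \sum_m 1/m$ to obtain
\[
\bar\delta(S_B) \leq \sum_{p^k \geq g(B)} \frac{1}{T^*(p^k)} \ll \sum_{p^k \geq g(B)} \frac{1}{\phi(p^k)},
\]
and choose $B_\epsilon$ large enough that this tail sum is $\leq \epsilon$.

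The main obstacle lies in executing Step~3 cleanly: naively $\sum_{p \geq M} 1/(p-1)$ diverges, so the crude threshold $g(B) \to \infty$ is insufficient on its own. Resolution requires sharpening Step~2 to extract a prime power $p^k$ that is large \emph{relative to the arithmetic of $d$}---specifically, $p^k \geq B^{1/r(d)}$ where $r(d) = O(\log d/\log\log d)$ reflects the bounded number of distinct prime divisors a CM torsion subgroup can have in terms of the ambient degree. After such a refinement, a dyadic decomposition over the size of $p^k$ (together with the fact that only $O(1)$ class-number-one orders contribute per prime power) should yield a tail of order $O(B^{-\alpha})$ for some $\alpha > 0$. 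This sharpening---controlling not merely the existence of a large torsion element but its prime-power structure---is the genuine arithmetic content, and should flow from the detailed CM torsion classification of Bourdon--Clark--Pollack applied with care.
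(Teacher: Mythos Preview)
This paper does not prove the quoted statement: it is cited from \cite{BCP17} as motivating background in the introduction, with no argument given anywhere in the text. There is therefore no proof in the present paper against which to compare your proposal.

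On the substance of your sketch, one step is not correct as written. From the existence of a CM point of order $p^k$ over a degree-$d$ field you assert ``this forces $T^*(p^k)\mid d$.'' The minimum degree $T^*(p^k)$ over which such a point occurs need not divide every degree over which such a point occurs; what one actually has from CM theory is a divisibility constraint of the shape $\phi(p^k)\mid c\cdot d$ for an absolute constant $c$ (compare the use of Theorem~\ref{CMOIT} and Theorem~7.2 of \cite{BC18} later in this paper). With that correction the density heuristic is on the right track, and you correctly identify the real obstacle: the tail $\sum_{p>M}1/(p-1)$ diverges, so a single large prime-power factor does not suffice. Your proposed fix via $r(d)=O(\log d/\log\log d)$ points in the right direction---this is the ``anatomy of integers'' flavour of the argument in \cite{BCP17}---but as written it remains a plan rather than a proof; in particular the claimed $O(B^{-\alpha})$ tail does not follow from anything you have stated.
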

The above theorem says that for any $\epsilon>0$ we choose, by removing some subset $S\subseteq \Z^+$ of upper density $\leq \epsilon$, there exists a uniform bound $B_\epsilon$ on all CM torsion subgroups over all degree $d\not\in S$ number fields; writing this out,
\[
\#E(F)[\tors]\leq B_\epsilon
\]
for any CM elliptic curve $E_{/F}$ where $F$ is a number field whose degree $[F:\Q]\not\in S$. In light of this, we say that the family of CM elliptic curves over number fields is \textit{typically bounded in torsion.}

Compare this result to the following, which shows that torsion is decidedly not typically bounded on the family of all elliptic curves over all number fields, which we denote by $\mathcal{A}(1)$.
\begin{theorem*}[Theorem 1.7 \cite{CMP}]
For all $B\in \Z^+$ the set
\[
\lbrace d\in \Z^+:T(d)\geq B\rbrace
\]
is cofinite in $\Z^+$, whence its upper density is equal to 1.
\end{theorem*}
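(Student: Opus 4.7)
The plan is to exhibit, for each fixed $B \in \Z^+$ and every sufficiently large $d$, an elliptic curve $E_{/F}$ with $[F:\Q] = d$ and $\#E(F)[\tors] \geq B$, which gives cofiniteness immediately. When $B \leq 16$, Mazur's theorem supplies an elliptic curve $E_0/\Q$ with $\#E_0(\Q)[\tors] \geq B$ (for instance, $E_0(\Q)[\tors] \cong \Z/2\Z \oplus \Z/8\Z$), and base-changing preserves this torsion, so $T(d) \geq B$ for every $d \geq 1$. I henceforth assume $B \geq 17$ and appeal to the modular curve $X_1(B)/\Q$. Since $B \geq 5$, $Y_1(B)$ is a fine moduli space, and any non-cuspidal closed point $x$ of $X_1(B)$ with residue field $\kappa(x)$ canonically determines an elliptic curve $E_x/\kappa(x)$ carrying a $\kappa(x)$-rational point of exact order $B$, so $T([\kappa(x):\Q]) \geq B$. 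The problem thus reduces to producing non-cuspidal closed points of $X_1(B)$ of every sufficiently large degree over $\Q$.

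For this I would invoke the following geometric fact (well known, and provable via Riemann--Roch and Hilbert irreducibility): \emph{every smooth projective geometrically integral curve $C/\Q$ with $C(\Q) \neq \emptyset$ has a closed point of degree $d$ over $\Q$ for all sufficiently large $d$.} Fixing $P_0 \in C(\Q)$, for $d \geq 2g_C$ the complete linear system $|dP_0|$ is a $\Q$-rational $\PP^{d-g_C}$, and the incidence variety $\FF := \lbrace (P, L) \in C \times |dP_0| : P \in \mathrm{supp}\, D_L \rbrace$ is a $\PP^{d-g_C-1}$-bundle over $C$, hence geometrically integral, with projection $\pi_2 : \FF \to |dP_0|$ generically finite of degree $d$. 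Hilbert's irreducibility theorem then produces a non-thin set of $L \in \PP^{d-g_C}(\Q)$ whose fiber in $\FF$ is a single closed point of degree $d$ --- equivalently, whose corresponding effective divisor $D_L$ is a prime degree-$d$ divisor on $C$. Applied to $C = X_1(B)$, which has $\Q$-rational cusps and only finitely many cuspidal closed points, this yields non-cuspidal closed points of $X_1(B)$ of every sufficiently large degree.

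The main obstacle is the geometric lemma above: one must verify that the cover $\pi_2 : \FF \to |dP_0|$ satisfies the hypotheses of Hilbert irreducibility (geometric integrality of $\FF$ and the generic-fiber degree count, both of which follow cleanly from Riemann--Roch for $d \geq 2g_C$) and that an irreducible specialized fiber translates to an honest prime closed point of $C$ rather than a reducible or non-reduced divisor. All remaining ingredients --- Mazur's theorem, fine moduli for $Y_1(B)$ when $B \geq 5$, and the existence of $\Q$-rational cusps on $X_1(B)$ --- are standard.
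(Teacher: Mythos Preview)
The paper under review does not prove this statement; it is quoted verbatim from \cite{CMP} in the introduction purely to contrast the behavior of $T(d)$ with that of $T_{\CM}(d)$, and no argument for it appears anywhere in the present paper. So there is no in-paper proof to compare your proposal against.

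That said, your argument is correct and is essentially the standard one. The geometric lemma --- that a smooth projective geometrically integral curve $C/\Q$ with $C(\Q)\neq\emptyset$ admits closed points of every degree $d\geq 2g_C$ --- does follow from Riemann--Roch plus Hilbert irreducibility along the lines you indicate: for $d\geq 2g_C$ the universal effective divisor $\FF\subset C\times |dP_0|$ is a $\PP^{d-g_C-1}$-bundle over $C$ (hence geometrically integral), and $\pi_2\colon\FF\to |dP_0|\cong\PP^{d-g_C}$ is finite flat of degree $d$, so HIT produces a non-thin set of $L\in\PP^{d-g_C}(\Q)$ with $\pi_2^{-1}(L)$ a single closed point of degree $d$. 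The one place to be precise is that $\FF$ should be taken as the universal \emph{divisor} (so that the scheme-theoretic fiber over $L$ is $D_L$ with its multiplicities), not merely the set-theoretic incidence locus; then irreducibility of the fiber genuinely forces $D_L$ to be reduced and prime. Since $X_1(B)$ has a $\Q$-rational cusp and only finitely many cuspidal closed points, while the non-thin set of good $L$ produces infinitely many distinct degree-$d$ points, avoiding cusps is immediate.
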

\subsection{Typically bounding torsion for other families}
Despite the failure of $\mathcal{A}(1)$ to be typically bounded in torsion, we saw that the proper subfamily $\mathcal{A}_\CM(1)$ of CM elliptic curves over number fields is. We will see there are also other familiar subfamilies of $\mathcal{A}(1)$ which are typically bounded in torsion.
 
Let $\FF=\lbrace {E_i}_{/F_i}\rbrace_{i\in I}$ be a family of elliptic curves $E_i$ defined over number fields $F_i$. From here on out, we assume that any such $\FF$ is ``closed under rational isomorphism", e.g., if ${E_1}_{/F}\in \FF$ and ${E_2}_{/F}$ is an elliptic curve with $E_2\cong _{F} E_1$, then ${E_2}_{/F}\in \FF$. We say that $\FF$ is \textit{typically bounded in torsion} if the torsion subgroups $E(F)[\tors]$ of elliptic curves $E_{/F}\in\FF$ can be made uniformly bounded after removing those elements whose number field degrees all lie in a certain subset of $\Z^+$ of arbitrarily small upper density. Formally stated, $\FF$ is typically bounded in torsion if for all $\epsilon>0$ there is a constant $B_\epsilon>0$ so that the set
\[
\lbrace d\in \Z^+: \exists E_{/F}\in \FF~\textrm{so that }[F:\Q]=d~\textrm{and } \#E(F)[\tors]\geq B_\epsilon\rbrace
\]
has upper density at most $\epsilon$. 

For a family $\FF$ of elliptic curves, one can define an $\FF$-analogue of $T(d)$,
\[
T_{\FF}(d):=\max_{E_{/F}\in \FF, [F:\Q]=d}\#E(F)[\tors].
\]
By Merel \cite{Mer96} one has $T_{\FF}(d)<\infty$. We see that $T_{\CM}(d)=T_{\mathcal{A}_\CM(1)}(d)$. It is clear that $\FF$ is typically bounded in torsion iff for all $\epsilon>0$ there exists a constant $B_\epsilon>0$ so that
\[
\overline{\delta}(\lbrace d\in\Z^+:T_{\FF}(d)\geq B_\epsilon\rbrace)\leq \epsilon.
\]
Compare this to Theorem 1.1.(i) \cite{BCP17} above.

One important family of elliptic curves is the ``base extension" family of all elliptic curves $E_{/F}$ which are defined over $\Q$. For example, Lozano-Robledo \cite{LR13} gives an explicit bound on all primes which divide $E(F)[\tors]$, and this bound is linear in the degree $[F:\Q]$. On the other hand, Gonz\'alez-Jim\'enez and Najman \cite{GJN20} show that $E(F)[\tors]=E(\Q)[\tors]$ when $2,3, 5, 7\nmid [F:\Q]$.
A (subtly) larger family is that of elliptic curves $E_{/F}$ with $\Q$-rational $j$-invariant. For such a family, Propp \cite{Pro18} provides (conditional) bounds on prime divisors of $E(F)[\tors]$ when $[F:\Q]$ is fixed. Clark and Pollack \cite{CP18} also show that for each $\epsilon>0$ there is a constant $C(\epsilon)>0$ for which one has polynomial bounds $\#E(F)[\tors]\leq C(\epsilon)[F:\Q]^{\frac{5}{2}+\epsilon}$.

Our primary family of study is a natural generalization of the above. For a fixed number field $F$, we consider the family of elliptic curves with $F$-rational $j$-invariant that are defined over number fields,
\[
\E_F:=\lbrace E_{/L}:[L:\Q]<\infty, j(E)\in F\rbrace.
\]
Clark, Milosevic and Pollack study the subfamily $\E'_F$ of elliptic curves $E_{/L}\in \E_F$ with $L\supseteq F$ \cite{CMP}.
They show (Theorem 1.8 \cite{CMP}) that torsion is typically bounded in $\E'_F$, provided $F$ contains no Hilbert class fields of imaginary quadratic fields and that the Generalized Riemann Hypothesis (GRH) is true. By a brief twisting argument, their proof also shows that $\E_F$ is typically bounded in torsion when one imposes these conditions.

Our main result is a generalization of Theorem 1.8 \cite{CMP}: we prove it for $\E_F$ unconditionally.
\begin{theorem}\label{mainThm}
For any number field $F$, torsion is typically bounded on the family $\E_{F}$ of elliptic curves $E$ defined over number fields and whose $j$-invariant $j(E)\in F$.
\end{theorem}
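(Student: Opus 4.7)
My plan is to split $\E_F$ into its CM and non-CM sub-families and to handle each unconditionally. The CM sub-family is contained in the family of all CM elliptic curves over number fields, so Theorem 1.1.(i) of \cite{BCP17} immediately gives typical boundedness for it. The substantive case is the non-CM sub-family, which I denote $\E_F^{\text{nCM}}$.

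As a first reduction, I would pass from $E_{/L} \in \E_F^{\text{nCM}}$ to the base extension $E_{/LF}$. This only enlarges the torsion and keeps $E$ inside $\E_F^{\text{nCM}}$, while $[LF:\Q] = [L:\Q][F:\Q]/[L\cap F:\Q]$ with $[L\cap F:\Q]$ ranging over the divisors of $n := [F:\Q]$. An elementary density argument (if $S'$ is the set of bad degrees for $\E'_F\cap \E_F^{\text{nCM}}$ then the set of bad degrees for $\E_F^{\text{nCM}}$ is contained in $\bigcup_{k\mid n}\{d : kd\in S'\}$, each summand having upper density at most $k\,\bar\delta(S')$) shows that typical boundedness for the non-CM part of $\E'_F$ implies the same for $\E_F^{\text{nCM}}$, at the cost of a factor $\sigma(n)$. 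A second reduction uses twists: given non-CM $E_{/L}$ with $j := j(E)\in F$ and $j \neq 0, 1728$, fix a model $E_0$ defined over $\Q(j)\subseteq F$; then $E\cong_L E_0^{(d)}$ is a quadratic twist, so $E(L)[\tors]$ embeds into $E_0(L(\sqrt{d}))[\tors]$ with $[L(\sqrt{d}):\Q]\leq 2[L:\Q]$. Hence it suffices to bound torsion on arbitrary base extensions of the finitely many models $E_0$ arising from $j\in F$.

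For this final step I plan to invoke Serre's open image theorem together with unconditional uniform bounds on isogenies and exceptional primes (due to Mazur, Kenku, and Momose in the small-prime regime, and effective unconditional refinements such as those of Masser--W\"ustholz, Bilu--Parent--Rebolledo, and Lombardo for large primes): for all primes $\ell$ outside a finite set depending only on $F$, the mod-$\ell$ Galois image of $E_0$ over $\Q(j)$ is a subgroup of $\GL_2(\F_\ell)$ of bounded index, so an $L$-rational $\ell$-torsion point on $E$ forces $[L:\Q(j)]$ to lie in a set of degrees of small upper density. Density bookkeeping then yields typical boundedness on $\E_F^{\text{nCM}}$. The main obstacle is exactly this uniform Galois image step: CMP rely on GRH-based effective Chebotarev, whereas here I would instead use unconditional uniform isogeny bounds, made feasible because $\Q(j)$ ranges only over subfields of the fixed field $F$; separating out CM curves (handled by \cite{BCP17}) also removes CMP's ``no Hilbert class field'' hypothesis on $F$.
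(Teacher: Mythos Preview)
Your reductions to the non-CM case and to base extensions over $F$ are fine, but the final step has a genuine gap. When you write ``the finitely many models $E_0$ arising from $j\in F$,'' note that there are \emph{infinitely} many $j\in F$, hence infinitely many such $E_0$ (the fields $\Q(j)$ range over only finitely many subfields of $F$, but that does not help). The heart of your argument is then the claim that ``for all primes $\ell$ outside a finite set depending only on $F$, the mod-$\ell$ Galois image of $E_0$ over $\Q(j)$ is a subgroup of $\GL_2(\F_\ell)$ of bounded index.'' None of the results you cite establishes this uniformly in $E_0$: Serre's open image theorem and its effective forms (Masser--W\"ustholz, Lombardo) give an exceptional set of primes depending on the individual curve (essentially on its height); Mazur and Bilu--Parent--Rebolledo work only over $\Q$. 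What you are asserting is precisely the strong isogeny conjecture $\mathrm{SI}(d)$ for $d\leq [F:\Q]$, which is open already for $d=2$. The whole point of the paper is to circumvent this.

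The paper's route is genuinely different. Rather than trying to rule out large-$\ell$ isogenies, it accepts that such isogenies may exist and instead uses the Larson--Vaintrob theorem (Theorem~\ref{CMisoChar}), which unconditionally constrains the isogeny character: either $r^{12}$ agrees with a character coming from a CM elliptic curve with $F$-rational CM, or $r^{12}=\chi_\ell^6$. From this constraint, together with a semisimplification argument (Lemma~\ref{GssInG}) and the CM open image theorem of Bourdon--Clark (Theorem~\ref{CMOIT}), one proves directly that $\ell-1\mid 864\cdot[F(R):\Q]$ for every $R\in E[\ell]^\bullet$. This is exactly condition~P for $\E_F$, and by Theorems~3.2 and~3.5 of \cite{CMP} that suffices. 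The key idea you are missing is that Larson--Vaintrob gives you something usable even when an $\ell$-isogeny is present.
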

\subsection{Strategy of the proof}

Our proof of Theorem \ref{mainThm} will be broken down into several steps, following \cite{CMP}. Let us introduce a
condition P on a family $\FF$ of elliptic curves (this is condition P2 in \cite{CMP}).
\begin{enumerate}[start=1,label={\bfseries P:}]
\item There exists a constant $c:=c(\FF)\in \Z^+$ such that for all primes $\ell\in \Z^+$ and all $E_{/F}\in \FF$, if $E(F)$ has a point of order $\ell$ then
\[
\ell-1\mid c\cdot [F:\Q].
\]
\end{enumerate} 

For an integer $d\in \Z^+$ let us consider the family $\E_d$ of elliptic curves over number fields with degree $d$ $j$-invariant,
\[
\E_d:=\lbrace E_{/F}:[F:\Q]<\infty, [\Q(j(E)):\Q]=d\rbrace.
\]
By Theorems 3.2.b and 3.5 \cite{CMP} any subfamily of a finite union of $\E_d$ will be typically bounded in torsion if and only if the subfamily satisfies condition P. In particular, from the containment
\[
\E_F\subseteq \bigcup_{d=1}^n \E_d,
\]
to prove Theorem \ref{mainThm} it suffices to show that $\E_F$ satisfies condition P.

Following the proof of Theorem 4.3 \cite{CMP}, to show that condition P holds for $\E_F$ it suffices to construct a constant $c:=c(F)\in \Z^+$ such that for any prime $\ell\gg_{F} 0$ and any elliptic curve $E_{/F}$ with an $F$-rational $\ell$-isogeny,
one has for all nontrivial $R\in E[\ell]$ that
\begin{equation}\label{midReq}
\ell-1\mid c\cdot [F(R):\Q].
\end{equation}
Since $E$ has an $F$-rational $\ell$-isogeny, the image $G$ of its mod$-\ell$ Galois representation is conjugate to a subgroup of upper triangular matrices in $\GL_2(\ell)$.
We will use this fact to compare the orbits of this representation acting on the standard unit vectors, to the orbits of its semisimplification.

First, a careful analysis of the description of our isogeny character from case 1 of Theorem \ref{CMisoChar} (see the following section) will allow us to relate the semisimplification of $G$ to the mod$-\ell$ Galois representation of some CM elliptic curve $E'_{/F}$ with $F$-rational CM. By an orbit-divisibility argument, we will show that \eqref{midReq} holds when we replace $E$ with $E'$, and conclude that such a divisibility holds for $E$ with constant $c:=864$. Following this, we will show that the same constant also works when in case 2 of Theorem \ref{CMisoChar}. 

It is worth noting that Lozano-Robledo \cite{LR18} uses a similar analysis of Theorem \ref{CMisoChar} to deduce, under certain hypotheses, degree-linear bounds on the order of any rational prime power torsion point. In fact, Theorem 1.9 \cite{LR18} proves \eqref{midReq} in the form of an inequality. However, divisibility allows us to apply Theorem 3.2 \cite{CMP} and conclude that torsion is typically bounded on $\E_F$.
\subsection{An additional result}
In our proof of Theorem \ref{mainThm}, the constant $c:=864$ for which \eqref{midReq} holds is independent of the choice of base field $F$. We will use this to improve Theorem 1.10 \cite{CMP}. 

We denote by SI$(d)$ the \textit{strong isogeny conjecture in degree $d$}, which asserts that there exists a prime $\ell_0:=\ell_0(d)\in\Z^+$ so that for all primes $\ell>\ell_0$ there do not exist non-CM elliptic curves $E_{/F}$ with an $F$-rational $\ell$-isogeny when $[F:\Q]=d$. The strong isogeny conjecture SI$(d)$ would follow from a stronger conjecture of Serre which has been used to prove other results, see for example Theorem 1.6 \cite{BELOV19}.

Let us denote by LV($d$) the hypothesis that the set $S_F$ of primes from Theorem \ref{CMisoChar} (see the next section)
 depends only on $d$ for all number fields $F$ of degree $d$. One may check that LV($d$) is a weaker hypothesis than SI($d$).
\begin{theorem}\label{Ed}
Fix an integer $d\in \Z^+$. If \emph{LV}$(d)$ holds, then the family
\[
\E_{d}:=\lbrace E_{/F}: [F:\Q]<\infty, [\Q(j(E)):\Q]=d\rbrace
\]
is typically bounded in torsion.
\end{theorem}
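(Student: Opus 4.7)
My plan is to mimic the proof of Theorem \ref{mainThm} and track exactly where the choice of base field $F$ enters, so that hypothesis LV$(d)$ can be used to replace dependence on $F$ by dependence on $d$. By Theorems 3.2.b and 3.5 of \cite{CMP}, typical boundedness of $\E_d$ is equivalent to $\E_d$ satisfying condition P, so I reduce to producing a constant $c = c(d)$ such that for every prime $\ell$ and every $E_{/K}\in \E_d$, if $E(K)$ contains a point of order $\ell$, then $\ell - 1 \mid c\cdot [K:\Q]$.

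Following the twisting reduction from the proof of Theorem \ref{mainThm} (in which the curve $E_{/K}$ is replaced by a model $E'_{/F}$ over $F = \Q(j(E))$ equipped with an $F$-rational $\ell$-isogeny), the task becomes the following: exhibit a constant $c(d)$ and a threshold $\ell_\ast(d)$ such that for every prime $\ell > \ell_\ast(d)$, every degree-$d$ number field $F$, every $E_{/F}$ with $j(E)\in F$ admitting an $F$-rational $\ell$-isogeny, and every nontrivial $R$ in the kernel, one has
\[
\ell - 1 \mid c(d)\cdot [F(R):\Q].
\]
The key observation is that the proof of Theorem \ref{mainThm} already supplies this divisibility with the absolute constant $864$, valid for all primes $\ell$ outside the set $S_F$ appearing in Theorem \ref{CMisoChar}. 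Under hypothesis LV$(d)$, the set $S_F$ is the same set $S_d$ for every degree-$d$ field $F$, so we may take $\ell_\ast(d) := \max S_d$ uniformly in $F$.

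For the finitely many remaining primes $\ell \leq \ell_\ast(d)$, I would absorb them into the constant by setting
\[
c(d) := 864\cdot \mathrm{lcm}\bigl\{\ell - 1 : \ell\text{ prime},\, \ell \leq \ell_\ast(d)\bigr\},
\]
which guarantees $\ell - 1\mid c(d)$ and hence the required divisibility for small $\ell$ as well. Combined with the uniform large-prime bound above, this furnishes condition P for $\E_d$ with the uniform constant $c(d)$, and Theorems 3.2.b and 3.5 of \cite{CMP} then yield typical boundedness.

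The main obstacle, and essentially the only substantive point beyond Theorem \ref{mainThm}, is the uniformity of the exceptional set across all degree-$d$ fields $F$; this is precisely what LV$(d)$ supplies. The remaining work is bookkeeping: one must verify that the twisting reduction of Theorem \ref{mainThm} carries through when $F$ is allowed to range over all degree-$d$ number fields, but since the twist cocycle takes values in $\Aut(E)$ (of size at most $6$) and all the quantitative estimates in the $\E_F$ argument were absolute in $F$, this transition is routine.
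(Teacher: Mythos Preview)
Your proposal is correct and follows essentially the same approach as the paper: reduce to condition P via Theorems 3.2.b and 3.5 of \cite{CMP}, invoke LV$(d)$ to make the exceptional prime set uniform in $d$, and then apply the $c=864$ divisibility established in the proof of Theorem \ref{mainThm}. One minor bookkeeping point: your threshold $\ell_\ast(d)$ must also absorb the finitely many primes excluded in Section \ref{reduceSplit} (the ramified primes and the inert-case bound $\ell+1\mid 72[F:K(\oo)]$), but since those exclusions depend only on $[F:\Q]=d$ this requires no new ideas.
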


The proof of Theorem \ref{Ed} is similar to the proof of Theorem 1.8 \cite{CMP}, which boils down to the proof of Theorem 4.3 \cite{CMP}. By Theorem 3.5 \cite{CMP}, to show that $\E_d$ is typically bounded in torsion it suffices to show it satisfies condition P. In turn, this reduces to showing that for any prime $\ell\gg_{d}0$ one has a constant $c:=c(d)\in \Z^+$ such that for any non-CM elliptic curve ${E}_{/\Q(j(E))}$ with a $\Q(j(E))$-rational $\ell$-isogeny, the divisibility
\[
\ell-1\mid c\cdot [\Q(j(E))(R):\Q]
\]
holds for all nontrivial $R\in E[\ell]$. In \cite{CMP}, the authors cite SI($d$) to exclude this case. But if one assumes LV$(d)$ instead, then our proof of Theorem \ref{mainThm} applies here and we may take $c:=864$. 
\subsection{Acknowledgments}
The author thanks Pete L. Clark for several insightful conversations about typically bounding torsion, and a careful review of earlier drafts of this paper. The author also thanks Nicholas Triantafillou for his help with the proof of Lemma \ref{GssInG}. Finally, the author thanks the referee for helpful comments and suggestions.
\subsection{Definitions and notation}
Given an extension $F/\Q$, we let $G_F:=\Gal(\overline{F}/F)$ denote the absolute Galois group of $F$. For an integer $N\in\Z^+$, let us write the mod$-N$ cyclotomic character of $F$ as
\[
\chi_N\colon G_F\rightarrow (\Z/N\Z)^\times.
\]
When $N=\ell$ is prime, we will write $\F_\ell:=\Z/\ell\Z$.

For an elliptic curve $E_{/F}$, let us write its
mod$-N$ Galois representation as
\[
\rho_{E,N}\colon G_F\rightarrow \Aut(E[N]).
\]
If $\lbrace P,Q\rbrace$ is a $\Z/N\Z$-basis for $E[N]$, then when working with explicit matrix representations we may also write
\[
\rho_{E,N,P,Q}\colon G_F\rightarrow \GL_2(N),
\]
where $\GL_2(N):=\GL_2(\Z/N\Z)$ is the general linear group of degree 2 over $\Z/N\Z$.

Suppose $E_{/F}$ has an $F$-rational cyclic $N$-isogeny $C_N$, i.e., a one-dimensional $G_F$-invariant submodule of $E[N]$. Then one has an \textit{isogeny character} which describes the action of $G_F$ on $C_N$,
\[
r\colon G_F\rightarrow (\Z/N\Z)^\times.
\]
Given a group homomorphism $f\colon G\rightarrow H$, we sometimes write $\im f$ for its image $f(G)$.
\section{Isogeny Characters}\label{Steps}
A crucial component of the proof of Theorem \ref{mainThm} is an analysis of the description of our isogeny character from a result of Larson and Vaintrob \cite{LV14}.
\begin{theorem}[Theorem 1 \cite{LV14}]\label{CMisoChar}
Let $F$ be a number field. Then there is a finite set of primes $S_F\subseteq \Z^+$ such that for all primes $\ell\not\in S_F$, if $E_{/F}$ is an elliptic curve with an $F$-rational $\ell$-isogeny, then for the corresponding isogeny character $r\colon G_F\rightarrow \F_\ell^\times$ one of the following holds:

1. There exists a CM elliptic curve $E'$ defined over $F$ such that its CM field $K:=\emph{End}(E')\otimes \Q $ is contained in $F$. Furthermore, there exists a character $\psi'\colon G_F\rightarrow \overline{\F_\ell}^\times$ for which we have both similarity
\[
(\rho_{E',\ell}\otimes_{\F_\ell}\overline{\F_\ell})(G_F)\sim \emph{im} \begin{bmatrix}
\psi'&*\\
0&\chi_\ell \psi'^{-1}
\end{bmatrix}
\]
and equality
\[
r^{12}=\psi'^{12}.
\]

2. GRH fails for $F(\sqrt{-\ell})$, and we have
\[
r^{12}=\chi_\ell^6.
\]
\end{theorem}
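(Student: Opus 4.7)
The plan is to analyze the isogeny character $r\colon G_F\to \F_\ell^\times$ through local constraints and global CM matching, broadly following the strategy of Serre and Momose. Since the $F$-rational $\ell$-isogeny produces a $G_F$-stable line in $E[\ell]$, $\rho_{E,\ell}$ is upper triangular with diagonal entries $r$ and $\chi_\ell r^{-1}$. The central object of study is $r^{12}$, because the exponent $12$ absorbs the unit groups of every possible CM order and is the natural common exponent for recognizing CM-type characters mod $\ell$.

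First I would carry out a local analysis. At a prime $v\mid \ell$ of good reduction, Raynaud's theorem on finite flat group schemes of type $(\ell,\ell)$ together with Serre's classification of the tame inertia action shows that $r^{12}|_{I_v}$ equals $\chi_\ell^{12 a_v}$ for some $a_v\in\{0,1\}$ when $E$ has ordinary reduction at $v$, and equals $\chi_\ell^{6}|_{I_v}$ when $E$ has supersingular reduction at $v$ (the level-$2$ fundamental characters, each raised to the twelfth power, produce this exponent). For $\ell$ outside a finite set depending on primes of bad reduction, $r$ is unramified at every $v\nmid \ell$ by N\'eron--Ogg--Shafarevich. Assembling these local pieces, $r^{12}$ factors as $\chi_\ell^{a}$ times a global character $\eta\colon G_F\to \F_\ell^\times$ that is unramified at every finite place of $F$ and whose order is bounded independently of $\ell$.

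Then I would perform the global CM matching. For $\ell\gg_F 0$, the bounded-order unramified character $\eta$ must factor through the Galois group of a fixed abelian extension of $F$, and its infinity-type constraints force it to coincide with the mod-$\ell$ reduction of an algebraic Hecke character attached to an imaginary quadratic subfield $K\subseteq F$; realizing that Hecke character on a CM elliptic curve $E'_{/F}$ with CM by $\oo_K$ produces a character $\psi'$ with the claimed upper triangular shape and $r^{12}=\psi'^{12}$, which is case 1. The remaining ``all supersingular'' pattern $r^{12}=\chi_\ell^{6}$ is case 2. The main obstacle, and where GRH enters, is ruling out this supersingular pattern unconditionally: an effective Chebotarev argument under GRH for $F(\sqrt{-\ell})$ would produce a small split prime whose Frobenius conjugacy is incompatible with the required supersingular inertial behavior at an auxiliary place, so the absence of such a prime is precisely the failure of GRH for $F(\sqrt{-\ell})$. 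Making the effective splitting bounds unconditional, and carefully controlling the finite exceptional set $S_F$ arising from small primes, primes of bad reduction, and ramification in $F/\Q$, is the technical heart of the argument.
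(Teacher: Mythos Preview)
The paper does not prove this theorem at all: it is quoted verbatim as Theorem~1 of Larson--Vaintrob \cite{LV14} and used as a black box. There is therefore no ``paper's own proof'' to compare your proposal against. Your sketch is a reasonable high-level outline of the Momose--Larson--Vaintrob strategy (local analysis of $r^{12}$ via Raynaud and fundamental characters, recognition of the resulting global character as coming from a CM Hecke character, and the effective Chebotarev/GRH input to handle the residual supersingular case), but none of that argument appears in the present paper, which simply invokes the result.

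If your goal was to supply a proof where the paper gives none, be aware that your outline glosses over the most delicate points of \cite{LV14}: the passage from the local inertial description to a \emph{global} algebraic Hecke character is not merely a matter of bounded-order unramified twists, and the precise mechanism by which GRH for $F(\sqrt{-\ell})$ enters (via bounds on the least inert prime and Frobenius constraints ruling out $r^{12}=\chi_\ell^6$ for large $\ell$) requires substantially more care than ``an effective Chebotarev argument.'' Your sketch is directionally correct but would need significant expansion to constitute a proof.
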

As per the theory of complex multiplication, the ring class field of an order $\oo$ in an imaginary quadratic field $K$ will equal $K(j(E'))$ for any $\oo$-CM elliptic curve $E'$, and all such fields will contain the Hilbert class field of $K$. In particular, for $F$ not to contain Hilbert class fields of imaginary quadratic fields is equivalent to there being no CM elliptic curve $E'$ defined over $F$ for which $\End(E')\otimes\Q\subseteq F$. For such a field $F$, if one also assumes that GRH is true then Theorem \ref{CMisoChar} tells us that for $\ell\gg_F 0$ there are no $F$-rational $\ell$-isogenies on any elliptic curve.
It is this observation, coupled with the fact that if $\FF$ satisfies condition P then one can exclude finitely many primes $\ell\in \Z^+$ and still have condition P hold for $\FF$, which shows us that $\E_F$ satisfies condition P -- this is outlined in Theorem 4.3.b \cite{CMP}. In particular, torsion is typically bounded on $\E_F$, conditionally.

To prove Theorem \ref{mainThm} we will not assume that GRH is true nor that $F$ does not contain the Hilbert class field of any imaginary quadratic field. Instead, we will leverage the extra information from Theorem \ref{CMisoChar} on our possible isogeny characters to construct a constant $c$ for which \eqref{midReq} holds for all $\ell\gg_ F 0$ and all $E_{/F}$ with an $F$-rational $\ell$-isogeny.
\section{Orbits Under the Galois Representation}
In our study of the mod$-\ell$ Galois representations of an elliptic curve $E_{/F}$, it will help to understand the orbits of the $\F_\ell[G_F]$-module $E[\ell]$ under various subgroups of $\GL_2(\ell)$.
\subsection{The semisimplification of an upper triangular subgroup of $\GL_2(\ell)$}\label{semisimpleOrbits}
Fix a prime $\ell$, and suppose $G\subseteq \GL_2(\ell)$ is a subgroup of upper triangular matrices. Let us consider its \textit{semisimplification}
\[
\Gss:=\left\lbrace \begin{bmatrix}
a&0\\
0&d
\end{bmatrix}\in \GL_2(\ell):\exists b\in \F_\ell~\begin{bmatrix}
a&b\\
0&d
\end{bmatrix}\in G\right\rbrace.
\]
The group $\GL_2(\ell)$ acts on the $\F_\ell$-vector space $V$ spanned by the column vectors $e_1:=\begin{bmatrix}
1\\0
\end{bmatrix}$ and $e_2:=\begin{bmatrix}
0\\1
\end{bmatrix}$ via multiplication on the left. Consequently, its subgroups $G$ and $\Gss$ also act on $V$ via left multiplication.

Ultimately, we will compare divisibility of the sizes of such orbits from the action of $\Gal(F(E[\ell])/F)$ on $E[\ell]^\bullet:=E[\ell]\ssm\lbrace O\rbrace$. 
The following lemma is an important step towards such a comparison.
\begin{lemma}\label{GssInG}
Let $\ell\in \Z^+$ be prime, and let $G$ be an upper triangular subgroup of $\emph{GL}_2(\ell)$. Then $G^{\emph{ss}}\subseteq G$ or $G$ is diagonalizable.
\end{lemma}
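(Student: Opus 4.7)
My plan is to study the projection $\pi\colon G\to\Gss$ that sends each upper triangular matrix to its diagonal part. Its kernel $K:=\ker\pi$ is a subgroup of the unipotent radical $U:=\left\{\begin{bmatrix}1 & c \\ 0 & 1\end{bmatrix}:c\in \F_\ell\right\}\cong \F_\ell$ of the Borel in $\GL_2(\ell)$, and since $|U|=\ell$ is prime, $K$ must be either all of $U$ or trivial. These two alternatives will correspond to the two conclusions of the lemma.

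If $K=U$, then $U\subseteq G$. Given any $s\in \Gss$, I pick a lift $g\in G$ and write $g=su$ for some $u\in U\subseteq G$; then $s=gu^{-1}\in G$, so $\Gss\subseteq G$.

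If $K$ is trivial, I claim $G$ is diagonalizable. The key observation is that the commutator subgroup $[G,G]$ always lies in $U$ (since this is true for any subgroup of the upper triangular Borel), while $G\cap U=K=\{I\}$; hence $G$ must be abelian. For any two elements $g_i=\begin{bmatrix}a_i & b_i \\ 0 & d_i\end{bmatrix}\in G$, comparing the $(1,2)$-entries of $g_1g_2$ and $g_2g_1$ then yields the identity $(a_1-d_1)b_2=(a_2-d_2)b_1$. Moreover, if any $g\in G$ satisfies $a=d$, then a direct computation of $g^{\mathrm{ord}(a)}$ shows that $b$ must vanish -- otherwise $g^{\mathrm{ord}(a)}$ would be a nontrivial element of $U\cap G$. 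Fixing any $g_0=\begin{bmatrix}a_0 & b_0 \\ 0 & d_0\end{bmatrix}\in G$ with $a_0\neq d_0$ (if none exists, $G$ is already diagonal), these two facts force the ratio $b/(d-a)$ to take the common value $x:=b_0/(d_0-a_0)$ for all $g\in G$ with $a\neq d$, while $b=0$ whenever $a=d$. A short calculation then confirms that conjugation by $\begin{bmatrix}1 & x \\ 0 & 1\end{bmatrix}$ sends each $\begin{bmatrix}a & b \\ 0 & d\end{bmatrix}\in G$ to $\begin{bmatrix}a & 0 \\ 0 & d\end{bmatrix}$, producing the desired simultaneous diagonalization.

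The main obstacle is recognizing that the trivial-kernel case forces $G$ to be abelian; once this is in hand, the commutator identity is precisely the algebraic constraint needed to pin down a single ``slope'' $x$ diagonalizing every element of $G$.
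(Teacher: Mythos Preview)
Your argument is correct and reaches the same dichotomy as the paper's proof, but organizes it more cleanly. The paper proceeds by three cases: (1) $G$ contains a non-diagonal matrix with equal diagonal entries, (2) $G$ is non-abelian, (3) neither holds. In cases (1) and (2) a nontrivial unipotent element is exhibited by hand (a power in case (1), a commutator in case (2)), which then generates all of $U$ and gives $\Gss\subseteq G$; in case (3) the paper appeals to the general fact that commuting diagonalizable matrices are simultaneously diagonalizable. Your use of the projection $\pi$ and the observation that $\lvert U\rvert=\ell$ is prime collapses the paper's cases (1) and (2) into the single alternative $K=U$, and your observation $[G,G]\subseteq U\cap G$ recovers abelianness in one line rather than by elimination. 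Your treatment of the diagonalizable case is also more self-contained: instead of invoking an external simultaneous-diagonalization lemma, you extract the common ``slope'' $x=b/(d-a)$ from the commutativity relation and write down the conjugating unipotent explicitly. Both proofs are short; yours has the mild advantage of making the dependence on $\ell$ being prime (so that $U$ has no proper nontrivial subgroups) completely transparent.
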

\begin{proof}
We will break the proof into three cases. In the first two cases, we will show containment $\Gss\subseteq G$.

\textbf{Case 1:} $G$ contains a non-diagonal matrix with repeated eigenvalues. Such a matrix is of the form $\begin{bmatrix}
a&b\\
0&a
\end{bmatrix}$ with $b\neq 0$. For such a matrix $\gamma$, its power $\gamma^{\ell-1}$ equals
\begin{equation}\label{1Lambda01}
\begin{bmatrix}
1&\lambda\\
0&1
\end{bmatrix}\in G
\end{equation}
where $\lambda:= (\ell-1)ba^{\ell-2}\neq 0$. It follows that $G$ contains the transvection
\[
\begin{bmatrix}
1&1\\
0&1
\end{bmatrix}=\gamma^{A}\in G,
\]
where $A\in\Z$ is taken so that $A\equiv \lambda^{-1}\pmod \ell$.
We conclude that for each $n\in \Z$ we have
\[
\begin{bmatrix}
1&n\\
0&1
\end{bmatrix}
=
\begin{bmatrix}
1&1\\
0&1
\end{bmatrix}^n
\in G.
\]
It follows that $G^{\sss}\subseteq G$. Indeed, for any element $\delta:= \begin{bmatrix}
a&b\\
0&d
\end{bmatrix}\in G$ we check that
\[
\delta\begin{bmatrix}
1&n\\
0&1
\end{bmatrix}
=
\begin{bmatrix}
a&an+b\\
0&d
\end{bmatrix},
\]
so taking $n\in \Z$ such that $n\equiv -ba^{-1}\pmod \ell$ shows that $\begin{bmatrix}
a&0\\
0&d
\end{bmatrix}\in G$.

\textbf{Case 2:} $G$ is not commutative.  Then choosing any $\gamma_1,\gamma_2\in G$ with nontrivial commutator, their commutator $\gamma_1\gamma_2\gamma_1^{-1}\gamma_2^{-1}$ must be of the form $\begin{bmatrix}
1&\lambda\\
0&1
\end{bmatrix}$ for some $\lambda\in \F_\ell^\times$. Then the steps following \eqref{1Lambda01} show that $G^{\sss}\subseteq G$.

\textbf{Case 3:} Neither case 1 nor case 2 holds. Then $G$ is commutative, and its non-diagonal matrices have distinct $\F_\ell$-rational eigenvalues. In particular, each non-diagonal matrix is diagonalizable over $\F_\ell$, and since all matrices in $G$ commute we deduce that there is a simultaneous diagonalization for these matrices over $\F_\ell$.\footnote{For a proof of this fact, see e.g. \cite{Con}.} The conclusion is that $G$ is conjugate to a subgroup of diagonal matrices.
\end{proof}

\subsection{Orbits in the diagonal case}\label{OrbitsSemi}
In our describing the orbits of the action of $\Gal(F(E[\ell])/F)$ on $E[\ell]^\bullet$, we will relate them to the orbits of $E[\ell]^\bullet$ under its semisimplification, the latter of which is contained in the subgroup of diagonal matrices. 

Let $G'\subseteq \GL_2(\ell)$ be a subgroup of diagonal matrices. Associated to this group are the two characters which describe the diagonal entries of matrices in $G'$,
\[
\chi_1\colon G'\rightarrow \F_\ell^\times,~\begin{bmatrix}
a&0\\
0&d
\end{bmatrix}\mapsto a
\]
and
\[
\chi_2\colon G'\rightarrow \F_\ell^\times,~\begin{bmatrix}
a&0\\
0&d
\end{bmatrix}\mapsto d.
\]
We will study the orbits of elements $v$ in $V^\bullet:=\F_\ell\langle e_1,e_2\rangle\ssm \lbrace 0\rbrace$ under $G'$; we will use $\oo_{G'}(v)$ to denote $G'-$orbits.

The following lemma describes the orbits under a diagonal subgroup.
\begin{lemma}\label{LemmaDiagonalOrbits}
Let $G'\subseteq \GL_2(\ell)$ be a subgroup of diagonal matrices, with diagonal characters $\chi_1$ and $\chi_2$. Then the number of orbits of the form $\oo_{G'}(ae_1)$ (resp. $\oo_{G'}(de_2)$) with $a\in \F_\ell^\times$ (resp. $d\in \F_\ell^\times$) is equal to the index $[\F_\ell^\times: \emph{im} \chi_1]$ (resp. $[\F_\ell^\times:\emph{im}\chi_2]$), and each such orbit is of size $\#\emph{im}\chi_1$ (resp. $\#\emph{im}\chi_2$). 
Orbits of the form $\oo_{G'}(ae_1+de_2)$ with $a,d\in \F_\ell^\times$ are in bijection with the orbit $\oo_{G'}(e_1+e_2)$, and the number of such orbits is $(\ell-1)^2/\#\oo_{G'}(e_1+e_2)$.
\end{lemma}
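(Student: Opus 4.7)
The plan is to exploit the fact that $G'$ preserves the axis decomposition $V = \F_\ell e_1 \oplus \F_\ell e_2$: any $\gamma \in G'$ acts by $xe_1 + ye_2 \mapsto \chi_1(\gamma) x e_1 + \chi_2(\gamma) y e_2$. An immediate consequence is that the map $(\chi_1,\chi_2)\colon G' \hookrightarrow \F_\ell^\times \times \F_\ell^\times$ is injective, since each $\gamma \in G'$ is a diagonal matrix and thus determined by its pair of eigenvalues.

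First I would handle the pure axis orbits. The calculation $\gamma \cdot ae_1 = (a\chi_1(\gamma)) e_1$ shows that $\oo_{G'}(ae_1) = (a \cdot \im \chi_1) \cdot e_1$, so the orbits of the form $\oo_{G'}(ae_1)$ with $a \in \F_\ell^\times$ are in canonical bijection with the cosets of $\im \chi_1$ in $\F_\ell^\times$. This yields $[\F_\ell^\times : \im\chi_1]$ orbits, each of size $\#\im\chi_1$. The count for orbits of the form $\oo_{G'}(de_2)$ follows by swapping the roles of $\chi_1$ and $\chi_2$.

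Next I would treat the mixed orbits. For any $v_0 = \alpha e_1 + \beta e_2$ with $\alpha,\beta \in \F_\ell^\times$, any $\gamma \in \Stab_{G'}(v_0)$ must satisfy $\chi_1(\gamma) = \chi_2(\gamma) = 1$, which forces $\gamma = 1$ by the injectivity of $(\chi_1,\chi_2)$. By orbit-stabilizer, $\#\oo_{G'}(v_0) = \#G' = \#\oo_{G'}(e_1+e_2)$; equivalently one has the explicit bijection $\chi_1(\gamma)e_1 + \chi_2(\gamma)e_2 \mapsto a\chi_1(\gamma)e_1 + d\chi_2(\gamma)e_2$, which is well-defined and invertible for the same reason. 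Since the set of mixed vectors $\{ae_1+de_2 : a,d\in \F_\ell^\times\}$ has size $(\ell-1)^2$ and is partitioned into orbits each of common size $\#\oo_{G'}(e_1+e_2)$, one counts $(\ell-1)^2/\#\oo_{G'}(e_1+e_2)$ such orbits, as claimed.

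The argument is essentially an application of orbit-stabilizer after observing the injectivity of $(\chi_1,\chi_2)$, so I do not anticipate a genuine obstacle. The subtlest point is interpreting the slightly informal phrasing ``in bijection with the orbit $\oo_{G'}(e_1+e_2)$'' as the assertion that every mixed orbit has the common cardinality $\#G'$, which is what the counting step requires.
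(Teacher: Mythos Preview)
Your proof is correct and follows essentially the same route as the paper: identify the axis orbits with cosets of $\im\chi_i$ in $\F_\ell^\times$, show all mixed orbits share a common size, and count. Your treatment of the mixed orbits is slightly more explicit than the paper's (you invoke orbit-stabilizer to get $\#\oo_{G'}(v_0)=\#G'$, whereas the paper just says ``a similar argument''), but the substance is the same.
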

\begin{proof}
First, we claim that there are $I_1:=[\F_\ell^\times: \im \chi_1]$ distinct orbits of the form $\oo_{G'}(ae_1)$ where $a\in \F_\ell^\times$. To see this, let us check that the map
\[
\F_\ell^\times/\im \chi_1\rightarrow \lbrace\textrm{Orbits }\oo_{G'}(ae_1):a\in \F_\ell^\times\rbrace, ~\overline{a}\mapsto \oo_{G'}(ae_1)
\]
is a well-defined bijection:
\begin{enumerate}[1.]
\item Well-defined: if $a=\chi_1(\gamma)b$ then $\gamma=\begin{bmatrix}
ab^{-1}&0\\
0&d
\end{bmatrix}$ for some $d\in\F_\ell^\times$, and thus
\[
\oo_{G'}(be_1)=\oo_{G'}(\gamma be_1)=\oo_{G'}(ae_1).
\]
\item Injective: suppose that $\oo_{G'}(ae_1)=\oo_{G'}(be_1)$. Then for some $\gamma\in G'$ we have $ab^{-1}e_1=\gamma e_1$, whence we have $\chi_1(\gamma)=ab^{-1}$, i.e., $a\cdot \im \chi_1\equiv b\cdot \im \chi_1$ in $\F_\ell^\times/\im \chi_1$.
\item Surjective: obvious.
\end{enumerate} 
We conclude that $e_1$ contributes $I_1$ distinct orbits of the form $\oo_{G'}(ae_1)$ with $a\in\F_\ell^\times$, each of size $\#\im \chi_1$. 
An identical argument shows that $e_2$ contributes $I_2:=[\F_\ell^\times: \im \chi_2]$ distinct orbits of the form $\oo_{G'}(de_2)$ where $d\in \F_\ell^\times$, each of size $\#\im\chi_2$.
The remaining orbits are of the form $\oo_{G'}(ae_1+de_2)$ where $ad\neq 0$. A similar argument shows that $\#\oo_{G'}(ae_1+de_2)=\#\oo_{G'}(e_1+e_2)$ when $ad\neq 0$.

Since the orbits above partition $V^\bullet:=\F_\ell\langle e_1,e_2\rangle\ssm \lbrace 0\rbrace$, one counts sizes and finds that
\[
I_1\#\im\chi_1+I_2\#\im\chi_2+X\#\oo_{G'}(e_1+e_2)=\ell^2-1
\]
where $X$ denotes the number of distinct orbits of the form $\oo_{G'}(ae_1+de_2)$ with $ad\neq 0$. In particular, since $I_1\#\im\chi_1=I_2\#\im\chi_2=\ell-1$ we conclude that
\[
X\#\oo_{G'}(e_1+e_2)=(\ell-1)^2.
\]
\end{proof}
\subsection{Uniform orbit divisibility}\label{orbDivSection}
Suppose that a group $G$ acts on a finite set $X$. For an element $x\in X$ let us use $\oo_G(x)$ to denote its orbit. 
Let us say that an integer $M\in\Z^+$ \textit{uniformly divides} $G$-orbits if there exists an integer $c\in\Z^+$ such that for all $x\in X$ one has
\[
M\mid c\cdot \#\oo_G(x).
\]
Clearly, any integer $M>0$ will uniformly divide $G$-orbits if we take $c:=M$. The relevance of uniform divisibility comes from 
the following lemma, which will allow us to prove \eqref{midReq} when we replace the image $G$ of our mod$-\ell$ Galois representation with a particular subgroup, and then ``carry over" this constant $c$ to the divisibility \eqref{midReq} for $E$. As will be shown in the proof of \eqref{midReq}, such a constant will not depend on $M:=\ell-1$, nor on $E$.
\begin{lemma}\label{LemmaUniformDivisibility}
Let $G$ be a group which acts on a finite set $X$ on the left, and let $H\subseteq G$ be a finite index subgroup. 
\begin{enumerate}[1.]
\item If for an integer $M>0$ there is a constant $c\in\Z^+$ such that for all $x\in X$ we have $M\mid c\cdot \#\oo_H(x)$, then we also have for all $x\in X$ the divisibility $M\mid c\cdot \#\oo_G(x)$.
\item If for an integer $M>0$ there is a constant $C\in\Z^+$ such that for all $x\in X$ we have $M\mid C\cdot \#\oo_G(x)$, then we also have for all $x\in X$ the divisibility $M\mid C\cdot [G:H]\cdot \#\oo_H(x)$.
\end{enumerate}
\end{lemma}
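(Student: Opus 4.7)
The plan is to relate the sizes of $G$-orbits and $H$-orbits via orbit--stabilizer, so that both parts fall out of a single divisibility chain. For any $x \in X$, set $S := \Stab_G(x)$, so that $\Stab_H(x) = H \cap S$. Then
\[
\#\oo_G(x) = [G:S], \qquad \#\oo_H(x) = [H : H \cap S].
\]
The standard identity $|HS| = |H||S|/|H \cap S|$ gives $[HS : S] = [H : H \cap S]$, so
\[
\#\oo_G(x) = [G:S] = [G : HS]\cdot[HS:S] = [G : HS]\cdot \#\oo_H(x).
\]
Since $H \subseteq HS \subseteq G$, the multiplicative factor $[G:HS]$ is a divisor of $[G:H]$, which yields the two-sided chain
\[
\#\oo_H(x) \ \Big|\ \#\oo_G(x) \ \Big|\ [G:H]\cdot \#\oo_H(x).
\]

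From here both parts are immediate. For part 1, multiplying the hypothesis $M \mid c\cdot \#\oo_H(x)$ through the divisibility $\#\oo_H(x) \mid \#\oo_G(x)$ gives $M \mid c\cdot \#\oo_G(x)$. For part 2, the hypothesis $M \mid C\cdot \#\oo_G(x)$ combined with $\#\oo_G(x) \mid [G:H]\cdot\#\oo_H(x)$ yields $M \mid C\cdot[G:H]\cdot \#\oo_H(x)$. No further structure on $H$ (normality, for example) is needed, which is important since the subgroups arising in the intended application (the semisimplification sitting inside an upper-triangular image) need not be normal.

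There is no real obstacle here; the only subtle point is making sure the identity $\#\oo_G(x) = [G:HS]\cdot \#\oo_H(x)$ is justified without assuming $HS$ is a subgroup, which is why I invoke the counting formula $|HS|=|H||S|/|H\cap S|$ (valid for any subgroups $H,S$ of $G$) rather than a second-isomorphism-theorem statement.
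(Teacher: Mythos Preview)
Your argument for part~2 is correct and matches the paper's: the divisibility $\#\oo_G(x)\mid [G:H]\cdot\#\oo_H(x)$ follows from $[G:H]\cdot[H:H\cap S]=[G:H\cap S]=[G:S]\cdot[S:H\cap S]$.

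However, your proof of part~1 has a genuine gap. The claimed divisibility $\#\oo_H(x)\mid\#\oo_G(x)$ is \emph{false} in general, and the error is precisely in the step you flag as ``the only subtle point'': when $HS$ is not a subgroup, the quantity $[G:HS]=|G|/|HS|$ need not be an integer, so the factorization $\#\oo_G(x)=[G:HS]\cdot\#\oo_H(x)$ does not give a divisibility. A concrete counterexample: take $G=S_3$ acting on $X=G/S$ by left multiplication with $S=\langle(13)\rangle$, and let $H=\langle(12)\rangle$. Then $|HS|=4$, so $|G|/|HS|=3/2$; correspondingly $\#\oo_G(eS)=3$ while $\#\oo_H(eS)=2$, and $2\nmid 3$. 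The counting formula $|HS|=|H|\,|S|/|H\cap S|$ is of course valid, but it only tells you $|S|$ and $|H|$ each divide $|HS|$, not that $|HS|$ divides $|G|$.

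The paper's approach to part~1 is different and is what you need: the $G$-orbit $\oo_G(x)$ is a disjoint union of $H$-orbits $\oo_H(y_1),\ldots,\oo_H(y_k)$ (since $H$ acts on the $G$-stable set $\oo_G(x)$), so $c\cdot\#\oo_G(x)=\sum_{j}c\cdot\#\oo_H(y_j)$. The hypothesis gives $M\mid c\cdot\#\oo_H(y_j)$ for every $j$, hence $M\mid c\cdot\#\oo_G(x)$. Note this genuinely uses that the hypothesis holds for \emph{all} $y\in X$, not just for the chosen $x$; your orbit--stabilizer approach tried to deduce the conclusion at $x$ from the hypothesis at $x$ alone, which the $S_3$ example shows is impossible.
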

\begin{proof}
Let us write the left coset representatives of $H$ as $g_1^{-1},\ldots, g_{[G:H]}^{-1}$. Then one has
\[
\oo_G(x)=\bigsqcup _{i=1}^{[G:H]} \oo_H(g_i x).
\]
It follows that for all $x\in X$ one has
\[
\#\oo_G(x)=\sum_{i=1}^{[G:H]}\#\oo_H(g_i x),
\]
from which the first part follows. The second part follows from the fact that for each $x\in X$ one has $\#\oo_G(x)\mid [G:H]\cdot \#\oo_H(x)$.
\end{proof}
\subsection{Action on $\ell$-torsion}
Let $F$ be a number field and $E_{/F}$ an elliptic curve. Throughout this paper we are considering for various primes $\ell\in\Z^+$ the mod$-\ell$ Galois representation of the absolute Galois group $G_F$ acting on the $\F_\ell$-module $E[\ell]$. At each level $\ell$, our action induces a faithful action $\rho_{E,\ell}\colon\Gal(F(E[\ell])/F)\hookrightarrow\Aut(E[\ell])$. We will often work with a basis $\lbrace P,Q\rbrace$ of $E[\ell]$ in mind, in which case the image $G:= \rho_{E,\ell,P,Q}(\Gal(F(E[\ell])/F))$ of our Galois representation is identified with a subgroup of $\GL_2(\ell)$.

For the image $G$ of a mod$-\ell$ Galois representation as above, Our $G$-orbits of points $R\in E[\ell]^\bullet$ are 
\[
\oo_G(R)=\lbrace R^\sigma:\sigma\in \Gal(F(E[\ell])/F)\rbrace,
\]
and the stabilizer of $R$ is the subgroup of automorphisms which fix $R$,
\[
\Stab (R)=\Gal(F(E[\ell])/F(R)).
\]
By the orbit-stabilizer theorem, the size of an element's orbit is the index of its stabilizer, whence the degree of an $\ell$-torsion point $R\in E[\ell]$ satisfies
\[
[F(R):F]=\#\oo_G(R).
\]
Consequently, torsion points in the same orbit share the same degree over $F$.
\section{Part One of the Proof: Allowing Rationally Defined CM}\label{ProofWithRationalCM}
To recapitulate our goals: we will prove Theorem \ref{mainThm}, which is an unconditional form of Theorem 1.8 \cite{CMP}. This means we will allow our number field $F$ to contain Hilbert class fields of imaginary quadratic fields, and we will not assume that GRH is true. Following the proof of Theorem 4.3 \cite{CMP}, we will show the following: for all primes $\ell\gg_F 0$ and all non-CM elliptic curves $E_{/F}$ whose mod$-\ell$ Galois representation image $G:=\rho_{E,\ell}(G_F)$ is contained in a Borel subgroup -- i.e., $G$ is conjugate to a subgroup of upper triangular matrices -- one has that all $G$-orbits are uniformly divisible by $\ell-1$ with an absolute constant, i.e., there exists $c\in\Z^+$ such that for all $\ell$-torsion $R\in E[\ell]^\bullet$, one has that Equation \eqref{midReq} holds,
\[
\ell-1\mid c\cdot [F(R):\Q].
\]

For this section, let us assume that GRH is true. This will land us in case 1 of Theorem \ref{CMisoChar}, where our isogeny character agrees with a CM ``mod$-\ell$ associated character" up to twelfth powers.
\subsection{Reducing to the split prime case}\label{reduceSplit}
As per Remark 3.1 \cite{CMP} we can exclude any finite amount of primes from our consideration. We will show why this lets us assume that $\ell\gg_F 0$ is split in the orders which come from Theorem \ref{CMisoChar}. 

For any $\oo$-CM elliptic curve $E'$ defined over $F$ where $K:=\oo\otimes \Q\subseteq F$, we have that the ring class field $K(\oo)=K(j(E'))\subseteq F$. Since $F$ may contain only finitely many such ring class fields,\footnote{This is an application of Heilbronn's classic result on imaginary quadratic class numbers \cite{Hei34} towards showing that the class number of an imaginary quadratic order $\oo$ tends to infinity as its discriminant $\Delta(\oo)\rightarrow -\infty$.} we need only to consider finitely many imaginary quadratic orders $\oo$ and (up to isomorphism) finitely many $\oo$-CM $E'_{/F}$. In particular, for the rest of the section let us assume that $\ell$ is unramified in any imaginary quadratic order $\oo$ with $[K(\oo):\Q]\leq [F:\Q]$. It will follow that our CM associated character $\psi'$ in Theorem \ref{CMisoChar} will give us a diagonalization of the image of our CM Galois representation over $\overline{\F_\ell}$.

Suppose that $E_{/F}$ is an elliptic curve whose mod$-\ell$ Galois representation image is contained in a Borel subgroup, i.e., suppose that $E$ has an $F$-rational $\ell$-isogeny. Let $r\colon G_F\rightarrow \F_\ell^\times$ denote its isogeny character. Assuming $\ell\gg_F 0$ as above and $\ell\not\in S_F$ where $S_F$ is as in Theorem \ref{CMisoChar}, there exists an elliptic curve $E'_{/F}$ with CM by an imaginary quadratic order $\oo$ whose CM field $K:=\oo\otimes \Q\subseteq F$, and there exists a character $\psi'\colon G_F\rightarrow \overline{\F_\ell}^\times$ for which both
\begin{equation}\label{DiagonalOverFlBar}
(\rho_{E',\ell}\otimes \overline{\F_\ell})(G_F)\sim \im \begin{bmatrix}
\psi'&0\\
0&\chi_\ell \psi'^{-1}
\end{bmatrix}
\end{equation}
and
\begin{equation}\label{twelfthPowers}
r^{12}=\psi'^{12}.
\end{equation}
We note that Equation \eqref{twelfthPowers} implies 
\begin{equation}\label{ordersGCDCharacters}
\gcd(12, \#\psi'(G_F))\cdot \#r(G_F)=\gcd(12,\# r(G_F))\cdot \#\psi'(G_F).
\end{equation}

We may assume that $\ell$ is odd. Suppose that $\ell$ is inert in $\oo$.
In this case, up to conjugation the image $\rho_{E',\ell}(G_F)$ of the CM Galois representation lands in the non-split Cartan subgroup of $\GL_2(\ell)$,
\[
C_{ns}(\ell):=\left\lbrace \begin{bmatrix}
a&b\epsilon\\
b&a
\end{bmatrix}:(a,b)\in \F_\ell^2, (a,b)\neq (0,0)\right\rbrace;
\]
here, $\epsilon$ is the least positive generator of $\F_\ell^\times$. For $\ell>2$, one has an isomorphism $\F_\ell(\sqrt{\epsilon})^\times\cong C_{ns}(\ell)$ by sending each element $a+b\sqrt{\epsilon}\in\F_\ell(\sqrt{\epsilon})^\times$ to its regular matrix representation $\begin{bmatrix}a&b\epsilon\\
b&a\end{bmatrix}$. The eigenvalues of such matrices are $a\pm b\sqrt{\epsilon}$, and since the Galois group $\Gal(\F_\ell(\sqrt{\epsilon})/\F_\ell)$ is generated by the Frobenius automorphism $\alpha\mapsto \alpha^\ell$, these eigenvalues are Galois conjugates. This implies that $\psi'$ and $\chi_\ell\psi'^{-1}$ are Galois conjugates.
Therefore, Equation \eqref{DiagonalOverFlBar} shows that
\[
\#\rho_{E',\ell}(G_F)=\#\psi'(G_F).
\]
Since $\#r(G_F)\mid (\ell-1)$, comparing this to \eqref{ordersGCDCharacters} shows that
\begin{equation}\label{CMGalRepDivl-1}
\#\rho_{E',\ell}(G_F)\mid 12(\ell-1).
\end{equation}
On the other hand, $\oo$-CM theory with $\ell$ inert tells us that for any torsion point $P'\in E'[\ell]^\bullet$ one has
\[
(\ell^2-1)\mid \#\oo^\times\cdot [F:K(\oo)]\cdot [F(P'):F]
\]
where $\oo^\times$ denotes the unit group of $\oo$, see Theorem 7.2 \cite{BC18}.
In particular, from 
\[
[F(P'):F]\mid [F(E'[\ell]):F]=\#\rho_{E',\ell}(G_F)
\]
we find that
\[
(\ell^2-1)\mid \#\oo^\times\cdot [F:K(\oo)]\cdot \#\rho_{E',\ell}(G_F).
\]
Comparing this with \eqref{CMGalRepDivl-1}, we conclude that $\ell+1\mid 72[F:K(\oo)]$, which bounds $\ell$ that come from the case where an elliptic curve has an $F$-rational $\ell$-isogeny for $\ell$ inert in an imaginary quadratic order in $F$. 

Henceforth, we assume that $\ell$ splits in any order $\oo$ we are considering. This implies that the CM image $\rho_{E',\ell}(G_F)$ lands in a split Cartan subgroup of $\GL_2(\ell)$, and so $\rho_{E',\ell}(G_F)$ is diagonalizable over $\F_\ell$. In particular, our character $\psi'$ must be the isogeny character of an $F$-rational $\ell$-isogeny $\langle P'\rangle$ of $E'$.  
\subsection{The proof}\label{nonCommCase}
As per Lemma 4, by conjugating if necessary we may assume that with respect to a basis $\lbrace P,Q\rbrace$ of $E[\ell]$, $G$ is both upper triangular and contains its semisimplification $\Gss$. Without loss of generality, let us suppose that $\langle P\rangle$ is an $F$-rational $\ell$-isogeny whose isogeny character is $r\colon G_F\rightarrow \F_\ell^\times$. Then we have
\[
G:=\rho_{E,\ell, P,Q}(G_F)=\im \begin{bmatrix}
r&*\\
0&\chi_\ell r^{-1}
\end{bmatrix}
\]
where $*\colon G_F\rightarrow \F_\ell^\times$ is not necessarily a character. 
Let us assume that $\ell\gg_F 0$ as in the previous subsection and $\ell\not\in S_F$. Then Theorem \ref{CMisoChar} applies: we have an $\oo$-CM elliptic curve $E'_{/F}$ with an $F$-rational $\ell$-isogeny $\langle P'\rangle$, say with isogeny character $\psi'\colon G_F\rightarrow \F_\ell^\times$, and there is a complementary basis element $Q'$ to $P'$ so that the image of the mod$-\ell$ Galois representation of $E'$ is diagonal with respect to the basis $\lbrace P',Q'\rbrace$. 
Let us write
\[
G':=\rho_{E',\ell, P',Q'}(G_F)=\im \begin{bmatrix}
\psi'&0\\
0&\chi_\ell \psi'^{-1}
\end{bmatrix}.
\]
We have written our actions here as left multiplication on the two-dimensional $\F_\ell$-vector space $V:=\F_\ell\langle e_1,e_2\rangle$ with $e_i$ the unit column vectors. So for each $v=ae_1+de_2\in V$, under $G$ one has that $v$ corresponds to $R_v:=aP+dQ\in E[\ell]$, and under $G'$ one has that $v$ corresponds to $R'_v:=aP'+dQ'\in E'[\ell]$. 

Let $C_s(\ell)$ denote the split Cartan subgroup of $\GL_2(\ell)$, i.e., the subgroup of diagonal matrices. Then since both $\Gss\subseteq G$ and $r^{12}=\psi'^{12}$, we have the inclusions
\[
C_s(\ell)\supseteq G'\supseteq G'^{12}\subseteq \Gss\subseteq G.
\]
Suppose we have a constant $C\in\Z^+$ so that uniform $C_s(\ell)$-orbit divisibility by $\ell-1$ holds with $C$: i.e., for all $v\in\ V^\bullet$
\[
\ell-1\mid C\cdot \#\oo_{C_s(\ell)}(v).
\]
Then by Lemma \ref{LemmaUniformDivisibility}, the containment $C_s(\ell)\supseteq G'$ implies that for all $v\in V^\bullet$
\[
\ell-1\mid C\cdot [C_s(\ell):G']\cdot \#\oo_{G'}(v).
\]
With uniform divisibility of $G'$-orbits via the constant $C\cdot [C_s(\ell):G']$, we apply Lemma \ref{LemmaUniformDivisibility} to the containment $G'\supseteq {G'}^{12}$ and find that for all $v\in V^\bullet$ 
\[
\ell-1\mid C\cdot [C_s(\ell):G']\cdot [G':{G'}^{12}]\cdot \#\oo_{{G'}^{12}}(v).
\]
A final application of Lemma \ref{LemmaUniformDivisibility} towards the containment ${G'}^{12}\subseteq G$ implies that for all $v\in V^\bullet$ one has
\[
\ell-1\mid C\cdot [C_s(\ell):G']\cdot [G':G'^{12}]\cdot \#\oo_{G}(v),
\]
i.e., for all $R\in E[\ell]^\bullet$
\[
\ell-1\mid C\cdot [C_s(\ell):G']\cdot [G':G'^{12}]\cdot [F(R):F].
\]
Since $[G':G'^{12}]\mid (12)^2$, this simplifies to
\begin{equation}\label{divUnknowns}
\ell-1\mid 144 C\cdot [C_s(\ell):G']\cdot [F(R):F].
\end{equation}
With \eqref{divUnknowns} in mind, we will conclude that \eqref{midReq} holds with an absolute constant by showing that uniform $C_s(\ell)$-orbit divisibility by $\ell-1$ holds with an absolute constant, and that the index $[C_s(\ell):G']$ is bounded in terms of $[F:\Q]$.

First, we claim that uniform $C_s(\ell)$-orbit divisibility by $\ell-1$ holds with constant $C:=1$ -- i.e., all $C_s(\ell)$-orbit sizes are divisible by $\ell-1$. By Lemma \ref{LemmaDiagonalOrbits}, there are exactly three orbits of $C_s(\ell)$ under its action on $V^\bullet$: they are $\oo_{C_s(\ell)}(e_1), \oo_{C_s(\ell)}(e_2)$ and $\oo_{C_s(\ell)}(e_1+e_2)$, and each are of size $\ell-1, \ell-1$ and $(\ell-1)^2$, respectively. This proves our first claim.

Next, we will show that the index $[C_s(\ell):G']$ is bounded in terms of $[F:\Q]$. 
In general, for an $\oo$-CM elliptic curve $E'_{/F}$ with $F$-rational CM -- i.e., $\oo\otimes \Q\subseteq F$ -- one has for any $N\in \Z^+$ that the image of its mod-$N$ Galois representation will land inside the mod-$N$ Cartan subgroup $C_N(\oo):=(\oo/N\oo)^\times$. In fact, the indices of \textit{all} mod$-N$ Galois representation images of $E'$ are uniformly bounded in terms which involve $F$.
\begin{theorem}[Corollary 1.5 \cite{BC18}]\label{CMOIT}
Let $K$ be an imaginary quadratic field. Then the index of the image of the mod$-N$ Galois representation of any $\oo$-CM elliptic curve $E'_{/F}$ with $\oo\otimes \Q=K\subseteq F$ satisfies 
\[
[C_N(\oo): \rho_{E',N}(G_F)]\mid \#\oo^\times[F:K(j(E'))].
\]
\end{theorem}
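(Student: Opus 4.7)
The plan is to split the divisibility into two independent contributions which then multiply to give the stated bound. The first contribution, $[F:K(j(E'))]$, comes from base change along the inclusion $F \supseteq K(j(E'))$, and the second, $\#\oo^\times$, comes from the structure of the mod-$N$ image over the ring class field $K(j(E'))$ itself. For the base change step, $G_F$ is a subgroup of $G_{K(j(E'))}$ of index $[F:K(j(E'))]$, so
\[
[\rho_{E',N}(G_{K(j(E'))}) : \rho_{E',N}(G_F)] \,\Big|\, [F:K(j(E'))].
\]
By multiplicativity of indices in the chain $\rho_{E',N}(G_F) \subseteq \rho_{E',N}(G_{K(j(E'))}) \subseteq C_N(\oo)$, the problem reduces to establishing
\[
[C_N(\oo) : \rho_{E',N}(G_{K(j(E'))})] \,\Big|\, \#\oo^\times.
\]

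For this ring class field assertion I would invoke the main theorem of complex multiplication. Over $K(j(E'))$ the curve $E'$ has an associated Hecke Grossencharacter $\psi_{E'}$ with values in $K^\times$, satisfying $\psi_{E'}(\cp) \in \oo$ for primes $\cp$ coprime to its conductor; moreover Frobenius at such $\cp$ acts on $E'[N]$ via multiplication by $\psi_{E'}(\cp) \pmod{N\oo}$ inside $C_N(\oo)=(\oo/N\oo)^\times$. Translating through Artin reciprocity, $\rho_{E',N}(G_{K(j(E'))})$ corresponds to the image in $C_N(\oo)$ of an open finite-index subgroup of the idele class group of $K(j(E'))$. The only obstruction to surjectivity onto $C_N(\oo)$ arises from the archimedean components combined with the global units of $K(j(E'))$, which contribute a subgroup lying inside the image of $\oo^\times \to C_N(\oo)$. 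The cokernel is therefore a quotient of $\oo^\times$, forcing the desired divisibility.

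The main obstacle lies in the class field theory bookkeeping in this second step, particularly when $\oo$ is non-maximal: one must relate the classical Hecke character formalism (cleanest for $\oo = \oo_K$) to the ring class field picture for arbitrary orders, track how the conductor of $\oo$ interacts with that of $\psi_{E'}$, and verify that the archimedean ambiguity maps into $C_N(\oo)$ precisely through $\oo^\times$ without further contamination. Once the maximal-order case is settled by the classical Shimura--Taniyama theory, the general case descends from the standard comparison between an arbitrary $\oo$ and $\oo_K$ via the conductor exact sequence, using that $E'$ is isogenous over $K(j(E'))$ to an $\oo_K$-CM curve.
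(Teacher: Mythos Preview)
The paper does not prove this statement: it is quoted verbatim as Corollary~1.5 of \cite{BC18} and used as a black box in Section~\ref{nonCommCase}. There is therefore no in-paper proof to compare your proposal against.

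That said, your outline is structurally sound. The two-step reduction---first divide out $[F:K(j(E'))]$ by base change, then handle the ring class field case---is exactly the architecture of the Bourdon--Clark argument. Your first step is complete and correct. The second step, however, is only a gesture: the claim that ``the cokernel is a quotient of $\oo^\times$'' is the entire content of the theorem over the ring class field, and your appeal to ``archimedean components combined with global units'' does not actually isolate why the obstruction is controlled by $\oo^\times$ rather than, say, the full unit group of $K(j(E'))$ or some ray class group quotient. In \cite{BC18} this is handled via an explicit adelic open image theorem for CM elliptic curves (their Theorem~1.1 / Theorem~1.4), which identifies the adelic image up to a scalar subgroup isomorphic to $\oo^\times / \oo_{\mathrm{tors}}^\times$ acting through the Cartan; the mod-$N$ statement then follows by reduction. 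Your remark about passing from non-maximal $\oo$ to $\oo_K$ via an isogeny and the conductor exact sequence is reasonable in spirit but would require care, since the isogeny changes both the curve and the Cartan subgroup, and the comparison of $C_N(\oo)$ with $C_N(\oo_K)$ is not entirely trivial. If you want a self-contained proof, the cleanest route is to follow \cite{BC18} directly rather than to reinvent the Grossencharacter bookkeeping.
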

Since $\ell$ splits in $\oo$, the mod$-\ell$ Cartan subgroup is split, i.e.,  $C_N(\oo)=C_s(\ell)$. Then by Theorem \ref{CMOIT}, we have that the index of the image of our CM mod$-\ell$ Galois representation $G'$ satisfies
\[
[C_s(\ell):G']\mid 6[F:\Q].
\]
Combining this with \eqref{divUnknowns} shows that for all $R\in E[\ell]^\bullet$ one has
\[
\ell-1\mid 864[F:\Q]\cdot [F(R):F].
\]
We conclude that \eqref{midReq} holds for $E$ with constant $c:=864$.
\section{Part Two of the Proof: Removing GRH}\label{ProofWithGRH}
Let $E_{/F}$ be an elliptic curve with an $F$-rational $\ell$-isogeny for $\ell\gg_F 0$. We will show that \eqref{midReq} holds with constant $c:=864$ without assuming that GRH is true. In doing so, there is an additional case which may appear from Theorem \ref{CMisoChar} that will be dealt with in this section.

Let $\langle P\rangle$ be an $F$-rational $\ell$-isogeny of $E$; let us write its isogeny character as $r\colon G_F\rightarrow \F_\ell^\times$. Suppose we are in case 2 of Theorem \ref{CMisoChar}, so that
\begin{equation}\label{GRHfalseCharacters}
r^{12}=\chi_\ell^6
\end{equation}
where $\chi_\ell\colon G_F\rightarrow \F_\ell^\times$ is the mod$-\ell$ cyclotomic character. 

As per Lemma \ref{GssInG}, let us choose a basis $\lbrace P,Q\rbrace$ of $E[\ell]$ for which $G:=\rho_{E,\ell, P,Q}(G_F)$ is upper triangular and $\Gss\subseteq G$. Then we have
\[
G=\im\begin{bmatrix}
r&*\\
0&\chi_\ell r^{-1}
\end{bmatrix}
\]
and thus
\[
\Gss=\im \begin{bmatrix}
r&0\\
0&\chi_\ell r^{-1}
\end{bmatrix}.
\]
By Equation \eqref{GRHfalseCharacters} we have $(\chi_\ell r^{-1})^6=r^6$, whence we deduce that 
\[
(\Gss)^6=\im\begin{bmatrix}
r^6&0\\
0&r^6
\end{bmatrix}
\]
is a subgroup of scalars.
As per the inclusions $(\Gss)^6\subseteq \Gss\subseteq G$, to show that \eqref{midReq} holds it suffices
by Lemma \ref{LemmaUniformDivisibility} to show that $\ell-1$ uniformly divides $(\Gss)^6$-orbits with constant $864[F:\Q]$. In fact, we will show that for all $v\in \F_\ell\langle e_1,e_2\rangle^\bullet$ one has
\[
\ell-1\mid 36[F:\Q]\cdot \#\oo_{(\Gss)^6}(v).
\]

By Lemma \ref{LemmaDiagonalOrbits} the sizes of orbits of the form $\oo_{(\Gss)^6}(ae_1)$ and $\oo_{(\Gss)^6}(de_2)$ with $ad\neq 0$ are $\#r^6(G_F)$, and the rest of the orbits $\oo_{(\Gss)^6}(ae_1+de_2)$ with $ad\neq 0$ share the same size as the orbit $\oo_{(\Gss)^6}(e_1+e_2)$. Since the action under $\Gss$ is scalar, it is clear that $\#\oo_{(\Gss)^6}(e_1+e_2)=\#r^6(G_F)$. Therefore, to prove \eqref{midReq} with constant $c:=864$ it suffices to show that
\begin{equation}\label{divGRHCase}
\ell-1\mid 864[F:\Q]\cdot \#r^6(G_F).
\end{equation}

Since $r(G_F)$ is cyclic, we observe that
\[
\#r^{12}(G_F)=\frac{\#r(G_F)}{\gcd(12, \#r(G_F))}.
\]
Since $\chi_\ell(G_F)$ is also cyclic, the image of its sixth power has size
\[
\#\chi_\ell^{6}(G_F)=\frac{\ell-1}{\gcd(6, \ell-1)}.
\]
Since $\chi_\ell^6=r^{12}$, we compare these sizes and find that
\[
\#\chi_\ell(G_F)\cdot \gcd(12,\#r(G_F))=\gcd(6, \#\chi_\ell(G_F))\cdot \#r(G_F),
\]
whence we have
\[
\#\chi_\ell(G_F)\mid 6\cdot \#r(G_F).
\]
Since we also have
\[
\#r(G_F)=\#r^6(G_F)\cdot \gcd(6,\#r(G_F))
\]
it follows that
\[
\#\chi_\ell(G_F)\mid 36\cdot \#r^6(G_F).
\]
Finally, since $\chi_\ell\colon G_\Q\rightarrow \F_\ell^\times$ is surjective, we have $[\F_\ell^\times:\chi_\ell(G_F)]\mid [F:\Q]$, and so we conclude that
\[
\ell-1\mid 36[F:\Q]\cdot \#r^6(G_F).
\]
This proves \eqref{divGRHCase}, which concludes our proof of Theorem \ref{mainThm}.

\end{document}